\let\origsection=\section \def\section{\@ifstar{\origsection*}{\mysection}} 
\def\mysection{\@startsection{section}{1}\z@{.7\linespacing\@plus\linespacing}{.5\linespacing}{\normalfont\scshape\centering\S}}
\renewcommand{\PrintDOI}[1]{\doi{#1}}
\numberwithin{equation}{section}
\def\rmlabel{\upshape({\itshape \roman*\,})}
\def\alabel{\upshape({\itshape \alph*\,})}
\def\nlabel{\upshape({\itshape \arabic*\,})} 
\let\polishlcross=\l
\def\l{\ifmmode\ell\else\polishlcross\fi}
\let\emptyset=\varnothing
\def\moverlay{\mathpalette\mov@rlay}
\def\mov@rlay#1#2{\leavevmode\vtop{   \baselineskip\z@skip \lineskiplimit-\maxdimen
   \ialign{\hfil$\m@th#1##$\hfil\cr#2\crcr}}}
\newcommand{\charfusion}[3][\mathord]{
    #1{\ifx#1\mathop\vphantom{#2}\fi
        \mathpalette\mov@rlay{#2\cr#3}
      }
    \ifx#1\mathop\expandafter\displaylimits\fi}
\newcommand{\dcup}{\charfusion[\mathbin]{\cup}{\cdot}}
\DeclareFontFamily{U}  {MnSymbolC}{}
\DeclareSymbolFont{MnSyC}         {U}  {MnSymbolC}{m}{n}
\DeclareFontShape{U}{MnSymbolC}{m}{n}{
    <-6>  MnSymbolC5
   <6-7>  MnSymbolC6
   <7-8>  MnSymbolC7
   <8-9>  MnSymbolC8
   <9-10> MnSymbolC9
  <10-12> MnSymbolC10
  <12->   MnSymbolC12}{}
\DeclareMathSymbol{\powerset}{\mathord}{MnSyC}{180}
\def\namedlabel#1#2{\begingroup
    #2%
    \def\@currentlabel{#2}%
    \phantomsection\label{#1}\endgroup
}
\theoremstyle{plain}
\newtheorem{theorem}{Theorem}[section]
\newtheorem{lemma}[theorem]{Lemma}
\newtheorem{claim}[theorem]{Claim}
\theoremstyle{definition}
\newtheorem{definition}[theorem]{Definition}
\newtheorem{fact}[theorem]{Fact}
\let\theta=\vartheta
\let\rho=\varrho
\let\phi=\varphi
\newcommand{\cC}{\mathcal{C}}
\newcommand{\cE}{\mathcal{E}}
\newcommand{\cP}{\mathcal{P}}
\begin{document}

\title{Ramsey Partial Orders from Acyclic Graphs}

\author[Jaroslav Ne\v{s}et\v{r}il]{Jaroslav Ne\v{s}et\v{r}il}
\address{Department of applied Mathematics and 
Institute of Theoretical Computer Science,  
Charles University, 11800 Praha 1, Czech Republic}
\email{nesetril@kam.ms.mff.cuni.cz}
\thanks{Both authors were supported by the grants 
CE-ITI P202/12/G061 of GA\v{C}R
and ERC-CZ-STRUCO LL1201}

\author[Vojt\v{e}ch R\"{o}dl]{Vojt\v{e}ch R\"{o}dl}
\address{Department of Mathematics and Computer Science, 
Emory University, Atlanta, GA 30322, USA}
\email{rodl@mathcs.emory.edu}
\thanks{The second author was also supported by NSF grant DMS 1301698.}

\begin{abstract}
We prove that finite partial orders with a linear extension form a Ramsey class.
Our proof is based on the fact that the class of acyclic graphs has the Ramsey property
and uses the partite construction.
\end{abstract}

\maketitle

\section{Introduction}
Let $\cC$ be a class of objects endowed with an isomorphism and a subobject relation.
Given two objects $P$ and $Q$ from $\cC$ we write $\binom{Q}{P}$ for the set of all 
subobjects of $Q$ isomorphic to~$P$. Also for $P'\in \binom{Q}{P}$ we will refer to 
an isomorphism $f\colon P\to P'$ as an {\it embedding} of~$P$ to $Q$.

For three objects $P$, $Q$, $R \in \cC$ and a positive integer $r$ the partition symbol
\[
	R\to (Q)^P_r
\]
means that no matter how $\binom{R}{P}$ gets colored by $r$ colors there is some
$\tilde Q \in \binom{R}{Q}$ for which~$\binom{\tilde Q}{P}$ is monochromatic.

The class $\cC$ is said to have the \emph{$P$-Ramsey property} if for every $Q\in\cC$ and every 
positive integer $r$ there exists some $R\in\cC$ with $R\to (Q)^P_r$. Notice that this is 
equivalent to demanding that for every $Q\in\cC$ there is some $R\in\cC$ with
$R\to (Q)^P_2$. Therefore, we will from now on only discuss the case $r=2$.

Finally, $\cC$ is a \emph{Ramsey class} if it has the $P$-Ramsey property for every $P\in\cC$.

Ramsey classes form a fertile area of study. The original combinatorial motivation was complemented
by the relationship to model theory, topological dynamics and ergodic theory. 

Among the first combinatorial structures whose 
Ramsey properties were studied is the class $\cP$ of partially ordered sets 
considered in \cite{NR3} and in \cite{PTW}, where all
partially ordered sets~$P$ for which $\cP$ posseses the $P$-Ramsey property
were characterised. 
These are precisely the partial 
orders $P$ with the property that for any two linear extensions $P_1=(P, \le_1)$ 
and $P_2=(P, \le_2)$
of $P$ there is an isomorphism between $P_1$ and $P_2$ which preserves both the partial and 
the linear order. 

Thus it is natural to consider partial orders with linear extensions.
 An \emph{ordered} (finite) \emph{poset} $P$ is a poset $(X,R)$ together with a linear extension 
$\leq$.
We will write $P=(X,R,\leq)$ and also $X=X(P)$, $R=R(P)$, $\leq = \leq_P$.
 
An \emph{embedding} of an ordered poset $P$ into an ordered poset $P'$ is an injective map 
$f\colon X(P) \to X(P')$
which satisfies
\begin{align*}
	(x,y) \in R(P) & \iff (f(x),f(y)) \in R(P') \\
    \text{ and } \qquad \qquad 	\qquad
    x \leq_P y & \iff f(x)\leq_{P'} f(y)\,.
\end{align*}
 
As a consequence of the main result of this article, Theorem \ref{thm:three}, 
we derive the following.

\begin{theorem}\label{thm:one}
The class $\cP$ of all ordered posets is a Ramsey class. 
\end{theorem}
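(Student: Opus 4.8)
The plan is to obtain Theorem~\ref{thm:one} from Theorem~\ref{thm:three} by regarding an ordered poset as a transitively closed ordered acyclic graph and recovering the poset from an acyclic graph by transitive closure. To an ordered acyclic graph $G=(X,E,\leq)$ associate the ordered poset $\overline G=(X,\widehat E,\leq)$, where $\widehat E$ is the transitive closure of $E$; since $\leq$ extends $E$ it also extends $\widehat E$, so $\overline G\in\cP$, and every ordered poset arises as $\overline G$ --- for instance from its own comparability relation, or from its Hasse diagram. Thus each $P\in\cP$ is the transitive closure of finitely many ordered acyclic graphs, namely of those $G$ whose edge relation lies between the Hasse diagram of $P$ and the comparability relation $R(P)$. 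Embeddings pass through this correspondence in one direction: if $f\colon G\hookrightarrow G'$ is an embedding of ordered acyclic graphs whose image is \emph{convex} in $G'$ (whenever $a,c$ lie in the image and a directed path of $G'$ runs from $a$ to $c$, every vertex on it lies in the image), then $f$ is also an embedding $\overline G\hookrightarrow\overline{G'}$, and moreover $\overline{G'}$ induces $\overline G$ on $f(X)$.

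Fix $P,Q\in\cP$; we must find $R\in\cP$ with $R\to(Q)^P_2$. Apply Theorem~\ref{thm:three} to the ordered acyclic graphs underlying $P$ and $Q$ (and, if necessary, to the finitely many small ordered acyclic graphs arising below) to obtain an ordered acyclic graph $S$ with the corresponding partition property, and set $R:=\overline S\in\cP$. The reason for the detour through acyclic graphs is that the copies of $Q$ which Theorem~\ref{thm:three} produces inside $S$ may be taken convex, so that each such $\tilde Q$ satisfies $R[\tilde Q]=\overline{S[\tilde Q]}\cong Q$ and hence $\tilde Q\in\binom RQ$. To finish, transfer a $2$-colouring $c$ of $\binom RP$ to the acyclic side: a poset-copy of $P$ inside a convex copy of $Q$ is a subset $A$ with $R[A]\cong P$ and with one of finitely many possible acyclic shapes in $S$; colour each such shape-in-$S$ by the $c$-colour of the poset it spans in $R$ (and arbitrarily when that poset is not isomorphic to $P$), invoke the partition property of $S$ to find a convex monochromatic copy $\tilde Q$ of $Q$, and read off that $\binom{\tilde Q}{P}$ is $c$-monochromatic, i.e.\ $R\to(Q)^P_2$.

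The step I expect to be the main obstacle is exactly the convexity of the copies of $Q$ supplied on the acyclic-graph side, which is used twice above. Without it, transitive closure can manufacture, inside a copy of $Q$, extra comparabilities coming from directed paths that leave the copy, so that $R[\tilde Q]$ is a strictly larger poset than $Q$ and the copy is useless; and the same leakage for the subsets $A$ would make the pulled-back colouring incoherent across the several acyclic shapes that a single poset-copy of $P$ can carry. Securing this convexity --- in effect a closure (``expansion'') property relating ordered acyclic graphs to ordered posets --- is precisely what one gains by working with acyclic graphs instead of posets directly and by running the partite construction, and it is the substance of Theorem~\ref{thm:three}; once it is available, the deduction of Theorem~\ref{thm:one} sketched above is routine.
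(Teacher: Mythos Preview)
Your plan misreads what Theorem~\ref{thm:three} actually supplies, and the gap you yourself flag (``convexity'') is real and not closed. Theorem~\ref{thm:three} does not take ordered acyclic graphs as input, nor does it promise convex copies of $Q$. Its input is the pair $A,B$ viewed as \emph{complete $RN$ graphs}: the relation $R$ records the comparabilities of the poset and the second relation $N$ records the \emph{non}-comparabilities, so that every pair of distinct vertices carries exactly one of the two labels. The output is a sequence of $RN$ graphs $C_\ell$ with $C_\ell\to(B)^A_2$, each $C_\ell$ free of bad quasicycles of length $\le\ell$, together with homomorphisms $C_{\ell+1}\to C_\ell$. Nothing about convex copies is asserted, and in fact the copies of $B$ in $C_\ell$ need not be convex.

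The paper's deduction of Theorem~\ref{thm:one} uses the $RN$ structure, not convexity. Set $\lambda=|X(C_2)|$ and look at $C_\lambda$. The homomorphism $h^*_\lambda\colon C_\lambda\to C_2$ forces every directed $R$-path in $C_\lambda$ to have at most $\lambda$ vertices; combined with the absence of bad quasicycles of length $\le\lambda$, this means the transitive closure $R^T$ of $R(C_\lambda)$ is disjoint from $N(C_\lambda)$. Now the crucial point: because $A$ and $B$ are \emph{complete} $RN$ graphs, every pair inside a copy of $A$ or $B$ already lies in $R\cup N$, so passing to $R^T$ adds no new comparabilities inside such a copy. Hence the poset $C=(X(C_\lambda),R^T)$ satisfies $\binom{C}{B}\supseteq\binom{C_\lambda}{B}$ and, for the same reason, every $RN$-copy of $A$ in $C_\lambda$ is a poset-copy of $A$ in $C$; since $A$ and $B$ are complete, these inclusions are in fact equalities on each copy of $B$, so $C\to(B)^A_2$ follows directly from $C_\lambda\to(B)^A_2$. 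Notice that this completely sidesteps your ``finitely many acyclic shapes'' manoeuvre: the $N$-relation pins down the non-edges, so a copy of $A$ in the $RN$ sense and a copy of $A$ in the poset sense coincide. Your convexity hypothesis would be strictly stronger than what is needed, is not provided by Theorem~\ref{thm:three}, and is not what the partite construction delivers.
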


This theorem was mentioned in the survey paper~\cite{N2005} without proof referring to 
~\cite{AH} and~\cite{NR1} from which this result can be deduced (see also~\cite{NR3}). 
In this paper 
we carry out the details of such a proof. We mention that similar results were proved 
in~\cite{PTW} and~\cite{Fo} and the theorem was explicitly stated and proved 
in~\cite{So1} (see also~\cite{So2} and \cite{SZ}). 
The method used in those four papers is different from the one we are using here.

In the proof we shall make use of the following notions:

\begin{itemize}
\item 
An \emph{ordered acyclic graph} is an oriented graph $(X,R)$ together with a 
linear order~$\leq$ on $X$ satisfying $(x,y) \in R \implies x < y$.

\item 
By $ACYC$ we denote the class of all ordered acyclic graphs with monotone embeddings.
\end{itemize}

As a special case of the result of \cite{AH} and \cite{NR1} (see also \cite{NR2}),
$ACYC$ is a Ramsey class. For the purposes of this article, it is actually more 
convenient to utilise a slight strengthening of this fact speaking about ordered 
structures with two graph relations rather than one. More precisely, these structures 
are defined as follows:  

\begin{definition}\label{def:one}
An \emph{$RN$ graph $(X,R,N,\leq)$} consists of a linear order $\leq$ on $X$ and two acyclic
relations $R, N \subseteq X\times X$ which are 
\begin{enumerate}[label=\rmlabel]
 \item\label{it:131} disjoint (i.e., $R\cap N = \emptyset$) and
 \item\label{it:132} compatible with $\leq$ (i.e., both $R\subseteq\;\leq$ and $N\subseteq\;\leq$).
\end{enumerate}
\end{definition}

For an $RN$ graph $A=(X,R,N,\leq)$ we will write $X=X(A)$, 
$R=R(A)$, $N=N(A)$, and~$\leq\;=\;\leq_A$.
Observe that the definition of $RN$ graphs does not require 
\[
	<_A=\bigl\{(x, y);\, x\le_A y \;\text{ and }\; x\ne y\bigr\}
\]
to be the union of $R(A)$ and $N(A)$. We call an ordered $RN$ graph $A$
{\it complete} if ${<_A=R(A)\cup N(A)}$ holds. Observe that any 
ordered poset $P=(X, R, \le)$ can be expanded to a complete $RN$
graph $(X, R, N, \le)$ with $N=< - R$. This construction 
will allow us to regard ordered posets as complete $RN$ graphs 
in Theorem~\ref{thm:three} below.

Embeddings between $RN$ graphs are defined in the expected way:

\begin{definition}
For two \emph{$RN$} graphs $A$ and $B$ an embedding from $A$ to $B$ is an injective map 
$f\colon X(A)\rightarrow X(B)$ such that 
\begin{enumerate}
\item[$\bullet$] $(x,y) \in R(A)  \iff \bigl(f(x),f(y)\bigr) \in R(B)$,
\item[$\bullet$] $(x,y) \in N(A)  \iff \bigl(f(x),f(y)\bigr) \in N(B)$,
\item[$\bullet$] and $x \leq_A y  \iff f(x)\leq_{B} f(y)$.
\end{enumerate}
\end{definition}

The following result is still a special case of the main theorems from \cite{AH} and \cite{NR1},
and its proof is not much harder than just showing that $ACYC$ is a Ramsey class. 
 
\begin{theorem}\label{thm:ACYC}
The class $ACYC_{RN}$ of all $RN$ graphs is a Ramsey class.
\end{theorem}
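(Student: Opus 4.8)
The plan is to deduce Theorem~\ref{thm:ACYC} from the same source that yields the Ramsey property of $ACYC$, namely the Ne\v{s}et\v{r}il--R\"odl and Abramson--Harrington theorems on Ramsey classes of ordered relational structures~\cite{NR1,AH} (see also~\cite{NR2}). The first step is to move to an order-forgetful presentation. To an $RN$ graph $A=(X,R,N,\le_A)$ I would associate the structure $\widehat A=(X,E_R,E_N)$ on the same vertex set, where $E_R$ and $E_N$ are the symmetric closures of $R$ and $N$; by the axioms of Definition~\ref{def:one} these are two disjoint, irreflexive, symmetric relations, i.e.\ $\widehat A$ is a graph whose edge set is partially $2$-coloured. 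Conversely, any such $2$-coloured graph together with an arbitrary linear order $\le$ on its vertices determines a unique $RN$ graph: orient every edge forward along $\le$ and put it into $R$ or $N$ according to its colour. These two operations are mutually inverse and turn embeddings of $RN$ graphs into embeddings of linearly ordered $2$-coloured graphs and back; hence $ACYC_{RN}$ has the $A$-Ramsey property for every $A$ if and only if the class of all linearly ordered $2$-edge-coloured graphs does.

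Now the class $\cK$ of finite $2$-edge-coloured graphs is hereditary and has free amalgamation: given $B_1,B_2\in\cK$ with a common induced substructure $A$, placing no edge between $B_1\setminus A$ and $B_2\setminus A$ produces a member of $\cK$. Applying the Ne\v{s}et\v{r}il--R\"odl theorem to this free amalgamation class, the class of all linear orderings of members of $\cK$ is a Ramsey class, and by the previous paragraph that is exactly what we want. Alternatively, and more in the spirit of what follows in this paper, one can run the partite construction proof of ``$ACYC$ is Ramsey'' almost verbatim: first establish the $RN$-analogue of the Partite Lemma for $RN$-partite systems (multipartite $RN$ graphs in which $R$- and $N$-edges run only between distinct parts), then iterate it along a picture, invoking the Hales--Jewett theorem at the base exactly as before. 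Carrying the pair $(R,N)$ through the construction in place of a single relation costs only extra bookkeeping.

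The one point I expect to require genuine care --- the same point in both routes --- is the compatibility condition $R,N\subseteq\;\le$. Whenever a step builds a new vertex set and must equip it with a linear order respecting several prescribed partial orders (the orders inherited from the copies being glued together), one has to check that the union of those prescribed orders is acyclic, so that a compatible linear extension exists; this holds because the copies overlap only in substructures on which the prescribed orders agree. Disjointness of $R$ and $N$ and their compatibility with $\le$ are then preserved automatically, since free amalgamation creates no new $R$- or $N$-edges. Past this bookkeeping the argument is, as asserted above, not harder than the one for $ACYC$.
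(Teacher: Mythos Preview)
The paper does not actually prove Theorem~\ref{thm:ACYC}; it simply cites it as a special case of the main theorems of~\cite{AH} and~\cite{NR1}, remarking that ``its proof is not much harder than just showing that $ACYC$ is a Ramsey class.'' Your proposal is therefore not competing with a proof in the paper but rather filling in the details the paper omits, and it does so correctly: the passage from $RN$ graphs to linearly ordered $2$-edge-coloured graphs is a clean bijection on objects and embeddings (disjointness of the symmetric closures $E_R$ and $E_N$ follows from $R\cap N=\emptyset$ together with $R,N\subseteq\;\le$, exactly as you indicate), and the resulting class is a free amalgamation class to which the Ne\v{s}et\v{r}il--R\"odl theorem applies directly. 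Your alternative sketch via the partite construction is likewise in the spirit of the paper's remark. In short, your write-up is consistent with the paper's claim and supplies the justification the paper leaves to the reader.
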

 
The proof of Theorem~\ref{thm:one} given below will utilise Theorem~\ref{thm:ACYC}.
It would be possible to base a very similar proof just on the fact that $ACYC$ is a Ramsey
class, but at one place the details would be slightly more cumbersome and from today's 
perspective it does not seem to be worth the effort.
 
We refine the above Theorem~\ref{thm:ACYC} by means of the following concepts:

\begin{definition}\label{def:two}
A \emph{bad quasicycle} of length $j\ge 2$ in an $RN$ graph $(X,R,N,\leq)$ consists 
of~$j$ vertices $x=x_1,x_2, \dots,$ $x_j = y$ with $(x_i,x_{i+1})\in R$ for $i=1,2,\dots, j-1$ 
and $(x,y)\in N$.
\end{definition}

\begin{definition}\label{def:lRN}
For an integer $\ell\ge 2$ the $RN$ graph $(X,R,N,\leq)$ is called an \emph{$\ell$-$RN$ graph} 
if it does not contain a bad quasicycle of length $j$ for any $j\in [2, \ell]$.
\end{definition}

Notice that due to condition~\ref{it:131} from Definition~\ref{def:one} every $RN$ graph is also
a $2$-$RN$ graph.
 
\begin{definition}\label{def:three}
We will say that an $RN$ graph is \emph{good} if it contains for no $\ell\ge 3$ a bad quasicycle 
of length $\ell$.
\end{definition}

(Consequently, any $RN$ graph $(X,R,N,\leq)$, where $(X,R,\leq)$ is a poset, is also good.)
In the result that follows, ordered posets are regarded as complete $RN$ graphs
in the way that was explained after Definition~\ref{def:one}.

\begin{theorem}\label{thm:three}
Let $A$ and $B$ be two ordered posets viewed as complete $RN$ graphs. 
There exists a sequence of $RN$ graphs $C_2,C_3, \dots$ such that for every $\ell\ge 2$
  \begin{enumerate}[label=\nlabel]
   \item $C_\ell \to (B)_2^A$, 
   \item $C_\ell$ is an $\ell$-$RN$ graph,
   \item and there is a homomorphism $h_\ell\colon C_{\ell+1} \to C_\ell$.
  \end{enumerate}
In particular, $h^*_\ell=h_{\ell-1}\circ \dots \circ h_2$ is a homomorphism from
$C_\ell$ to $C_2$.
\end{theorem}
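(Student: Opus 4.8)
The plan is to build the sequence $C_\ell$ by induction on $\ell$ using the partite construction, with Theorem~\ref{thm:ACYC} supplying the necessary partite amalgamation lemma at each stage. For $\ell=2$ we simply invoke Theorem~\ref{thm:ACYC} (or, since $A$ and $B$ are complete $RN$ graphs and hence good, even the Ramsey property of $ACYC$) to obtain an $RN$ graph $C_2$ with $C_2\to(B)_2^A$; since every $RN$ graph is automatically a $2$-$RN$ graph, condition~(2) holds for free. The work is in the inductive step: given $C_\ell$ satisfying (1) and (2), we must produce $C_{\ell+1}$ satisfying (1) and (2) \emph{together with} a homomorphism $h_\ell\colon C_{\ell+1}\to C_\ell$. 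The homomorphism requirement is what forces us to use the partite construction rather than a black-box application of Theorem~\ref{thm:ACYC}: we want $C_{\ell+1}$ to arise as a structure whose vertex set projects onto $X(C_\ell)$ in a way that sends $R$-edges to $R$-edges and $N$-edges to $N$-edges.

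Concretely, I would set up the partite construction in the standard Nešetřil–Rödl fashion. Let $a=|X(A)|$. Choose a picture: an auxiliary $RN$ graph $P_0$ which is an $a$-partite "blow-up" skeleton, built so that its transversal copies of $A$ are indexed by the copies of $A$ in $C_\ell$; the projection $P_0\to C_\ell$ collapsing each part to its $C_\ell$-vertex is a homomorphism. Then process the copies of $B$ (equivalently, enumerate the "columns" or the relevant subconfigurations) one at a time: at step $k$, having built $P_{k-1}$, amalgamate over each current copy of $A$ that needs it a Ramsey object $R_k$ obtained from Theorem~\ref{thm:ACYC} applied to the appropriate pair, gluing copies of $P_{k-1}$ along their $A$-transversals inside the partite frame. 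Each partite amalgamation step preserves the homomorphism to $C_\ell$ (this is the routine "partite lemma" bookkeeping), so the final object $P_t=:C_{\ell+1}$ still admits a homomorphism $h_\ell\colon C_{\ell+1}\to C_\ell$, and by the usual partite-construction argument $C_{\ell+1}\to(B)_2^A$.

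The main obstacle — and the reason the construction is stratified by $\ell$ — is maintaining condition~(2): we must ensure $C_{\ell+1}$ contains no bad quasicycle of length $j$ for any $j\in[2,\ell+1]$. A bad quasicycle of length $j$ in $C_{\ell+1}$ would be an $R$-path $x_1,\dots,x_j$ together with an $N$-edge $(x_1,x_j)$. Here the homomorphism $h_\ell$ is exactly the tool that makes the induction work: since $h_\ell$ maps $R$-edges to $R$-edges and $N$-edges to $N$-edges, the image $h_\ell(x_1),\dots,h_\ell(x_j)$ is a (possibly degenerate) bad quasicycle in $C_\ell$ — and because $C_\ell$ is an $\ell$-$RN$ graph, this forces the image to be degenerate, i.e. some $h_\ell(x_i)=h_\ell(x_{i'})$, so the offending configuration lives "within a single part" of the partite construction over $C_\ell$. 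The amalgamation must therefore be carefully arranged so that within a part no $R$-edge is ever created, and more delicately, that the freshly amalgamated $N$-edges — the only new $N$-edges — never close up a short $R$-path. One handles this by checking that in each partite amalgamation step the new $N$-edges go between vertices whose $h_\ell$-images coincide only if forced, and by an inductive verification that no $R$-path of length $\le\ell$ can connect the two endpoints of a new $N$-edge; this last verification is where one genuinely uses that we are allowed to throw away the longer bad quasicycles, i.e. that we only need an $(\ell+1)$-$RN$ graph and not a good one at finite stage.

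Finally, the "in particular" clause is immediate: composing the homomorphisms $h_{\ell-1},\dots,h_2$ given by~(3) yields a homomorphism $h^*_\ell=h_{\ell-1}\circ\cdots\circ h_2\colon C_\ell\to C_2$, since a composition of homomorphisms is a homomorphism. I expect the bookkeeping in the partite step — precisely specifying the partite skeleton, the enumeration of copies of $B$, and the verification that each amalgamation neither destroys the homomorphism nor creates a short bad quasicycle — to be the technically heaviest part; the Ramsey input itself is entirely outsourced to Theorem~\ref{thm:ACYC}.
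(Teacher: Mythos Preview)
Your high-level plan --- base case via Theorem~\ref{thm:ACYC}, inductive step via the partite construction over $C_\ell$, with the projection to $C_\ell$ serving as $h_\ell$ --- is exactly the paper's approach. Two things, however, are off.

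First, the roles of $A$ and $B$ in your sketch of the partite construction are reversed. The picture $P_0$ is a $C_\ell$-partite $RN$ graph consisting of \emph{disjoint copies of $B$}, one for each $\tilde B\in\binom{C_\ell}{B}$, placed so that the projection to $C_\ell$ sends each $\tilde B$ to the corresponding copy in $C_\ell$. The iteration is then over the copies $A_1,\dots,A_\alpha\in\binom{C_\ell}{A}$: at step $j+1$ one applies the Partite Lemma to the $A$-partite subgraph $E_{j+1}$ of $P_j$ lying over $X(A_{j+1})$, obtains $F_{j+1}\to(E_{j+1})^A_2$, and amalgamates copies of $P_j$ along the copies of $E_{j+1}$ inside $F_{j+1}$. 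Your description (``transversal copies of $A$ indexed by copies of $A$'', ``process the copies of $B$'') has these switched.

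Second, and more seriously, your mechanism for excluding short bad quasicycles is not right, and the actual argument hinges on an ingredient you do not mention. You write that the image $h_\ell(x_1),\dots,h_\ell(x_j)$ of a bad quasicycle ``must be degenerate'' because $C_\ell$ is an $\ell$-$RN$ graph. But in any $RN$ graph the relation $R$ is compatible with the linear order, so each $R$-edge goes strictly forward; hence the $h_\ell(x_i)$ are automatically distinct, and the image is a genuine bad quasicycle of the same length $j$ in $C_\ell$. This immediately rules out $j\le\ell$ --- no degeneracy involved --- and leaves only $j=\ell+1$ to analyse. For that case the argument is not about ``no $R$-edge within a part'' but about the amalgamation structure: a bad quasicycle of length $\ell+1$ in $P_{j+1}$ cannot lie in a single copy of $P_j$ (by induction), so it must cross between copies, which forces at least two of the projected vertices $f_{j+1}(x_r),f_{j+1}(x_s)$ with $s-r\ge 2$ to lie in $X(A_{j+1})$. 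Here the \emph{completeness} of $A$ as an $RN$ graph is decisive: the pair $(f_{j+1}(x_r),f_{j+1}(x_s))$ must lie in $R(A_{j+1})$ or $N(A_{j+1})$, and either way one manufactures a bad quasicycle of length $\le\ell$ in $C_\ell$, contradicting the inductive hypothesis. Without invoking completeness of $A$ you have no way to shortcut the path, and the length-$(\ell+1)$ case does not go through.
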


We conclude this introduction by showing that Theorem~\ref{thm:three} 
implies Theorem~\ref{thm:one}.

To this end, let $A$ and $B$ be two given ordered posets viewed as complete $RN$ graphs.
Consider a sequence $C_2, C_3, \dots$ as guaranteed by Theorem~\ref{thm:three}.
Set $|X(C_2)|=\lambda$ and consider the $\lambda$-$RN$ graph $C_\lambda$ with homomorphism
$h^*_\lambda\colon C_\lambda \to C_2$ just obtained. 

Since $C_\lambda$ contains no bad quasicycle of length $\ell\leq\lambda$, 
while due to the existence of the homomorphism
$h^*_\lambda\colon C_\lambda \to C_2$ no direct path in $C_\lambda$ has more 
than $\lambda=|X(C_2)|$ 
vertices, we infer that the transitive closure $R^T$ of $R=R(C_\lambda)$ is disjoint 
with $N(C_\lambda)$.
Consequently, if we take the transitive closure of $R(C_\lambda)$, all copies of $A$ and $B$ 
in $C_\lambda$ (which are complete $RN$ graphs) remain intact 
(i.e., contain no edges added by taking the transitive closure). 
In other words, the partial order $C=\bigl(X(C_\lambda), R^T\bigr)$ 
satisfies $\binom{C}{B} \supseteq \binom{C_\lambda}{B}$.
 
Consequently, $C\to (B)^A_2$ and Theorem \ref{thm:one} follows.
 
\section{Proof of Theorem \ref{thm:three}}

Throughout this section we fix two ordered posets $A$ and $B$, for which we want to 
prove Theorem~\ref{thm:three}. 

The desired sequences of $RN$ graphs $(C_\ell)$ and homomorphisms $(h_\ell)$ will 
be constructed recursively, beginning with the construction of $C_2$.  
For this purpose we invoke Theorem~\ref{thm:ACYC}, which applied to~$A$ and~$B$ 
yields the desired $RN$ graph $C_2$ with $C_2 \to (B)^A_2$. 

Now suppose that for some integer $\ell\ge 3$ we have already managed to construct
an $(\ell-1)$-$RN$ graph $C_{\ell-1}$ with $C_{\ell-1}\to (B)^A_2$. 
To complete the recursive construction we are to exhibit an $\ell$-$RN$ graph $C_\ell$
satisfying $C_\ell\to (B)^A_2$ together with a homomorphism $h_{\ell-1}$ from $C_\ell$
to $C_{\ell-1}$.

To this end we employ the partite construction. 
In fact this proof is a variant of the proofs  
given in~\cite{NR4} and~\cite{NR5}.

An essential component of the partite construction is a \emph{partite lemma}, which will be 
described first.

\subsection{Partite Lemma}

Recalling that $A$ is a good complete $RN$ graph, we have a linear order $\le_A$
on $X(A)$ extending $R(A)$. Let us write $X(A)=\{v_1, v_2, \dots, v_p\}$ in such a way that
$v_1 <_A v_2 <_A \dots <_A v_p$.

\begin{definition} \label{dfn:ARN}
An \emph{ordered $A$-partite $RN$ graph} $E$ is an $RN$ graph with 
a distinguished partition $X(E) = X_1(E) \dcup \dots \dcup X_p(E)$
of its vertex set satisfying
\begin{enumerate}[label=\rmlabel]
\item\label{it:2} 
$(x, y)\in R(E)\cap \bigl(X_i(E)\times X_j(E)\bigr) \,\,\, \Longrightarrow \,\,\, (v_i, v_j)\in R(A)$,
\item\label{it:3} 
$(x, y)\in N(E)\cap \bigl(X_i(E)\times X_j(E)\bigr) \,\,\, \Longrightarrow \,\,\, (v_i, v_j)\in N(A)$,
\item\label{it:4} and $X_1(E) <_E X_2(E) <_E \dots <_E  X_p(E)$.
\end{enumerate}
\end{definition}

Note that an ordered $A$-partite $RN$ graph can also be viewed as an $RN$ graph with a 
distinguished homomorphism into $A$. 
We observe the following:

\begin{fact} \label{fct:1}
For every $A$-partite $RN$ graph $E$ the following holds:
\begin{enumerate}[label=\alabel]
\item\label{it:5}
	If $(x,y)\in R(E)\cup N(E)$ and $(x,y)\in X_i(E)\times X_j(E)$, 
	then $i<j$. In particular,
	\[
		\bigl(R(E)\cup N(E)\bigr)\; \cap \; \bigl(X_i(E)\times X_i(E)\bigr) = \emptyset
 		\quad \text{ for all } i=1,2,\dots, p\,.
	\]
\item\label{it:6}  
	Any copy of $A$ in $E$ (i.e., any $\tilde A \in \binom{E}{A}$)
	is \emph{crossing} in the sense that 
	\[
		\big|X\bigl(\tilde A\bigr)\cap X_i(E)\big| = 1 
		\quad \text{ holds for all } i=1,2,\dots,p \,.
	\]
\item\label{it:7} $E$ is good.  
\end{enumerate}
\end{fact}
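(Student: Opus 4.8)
The plan is to prove the three parts of Fact~\ref{fct:1} in order, each one feeding into the next.

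For part~\ref{it:5}, suppose $(x,y) \in R(E) \cup N(E)$ with $x \in X_i(E)$ and $y \in X_j(E)$. By conditions~\ref{it:2} and~\ref{it:3} of Definition~\ref{dfn:ARN} we get $(v_i, v_j) \in R(A) \cup N(A)$; but $A$ is a complete $RN$ graph obtained from a poset, so $R(A) \cup N(A) = {<_A}$, hence $v_i <_A v_j$, and by our indexing of $X(A)$ this means $i < j$. (Even without completeness, condition~\ref{it:132} of Definition~\ref{def:one} gives $v_i \le_A v_j$, and disjointness of $R(A), N(A)$ from the diagonal plus $i \ne j$ would follow; but the clean argument uses $R(A) \cup N(A) = {<_A}$.) The ``in particular'' clause is the case $i = j$, which is now impossible.

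For part~\ref{it:6}, let $\tilde A \in \binom{E}{A}$ via an embedding $g\colon X(A) \to X(E)$. The composition of $g$ with the partition map $X(E) \to \{1,\dots,p\}$ (sending $X_i(E)$ to $i$) is a map $\pi\colon X(A) \to \{1,\dots,p\}$. Since $A$ is a poset with $\ge 1$ of each pair comparable — more to the point, since the linear order $\le_A$ is a relation every pair of distinct vertices satisfies in one direction, and $\le_A = R(A) \cup N(A) \cup \{\text{equality}\}$ — for any $v_k \ne v_m$ we have $(v_k, v_m)$ or $(v_m, v_k)$ in $R(A) \cup N(A)$, so the corresponding pair of images lies in $R(E) \cup N(E)$, and by part~\ref{it:5} their partition-indices are distinct. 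Hence $\pi$ is injective; as $|X(A)| = p$ and there are exactly $p$ parts, $\pi$ is a bijection, which is precisely the crossing condition $|X(\tilde A) \cap X_i(E)| = 1$ for every $i$. The one subtlety to record: we must use that $\le_A$ is a \emph{linear} order (total), so that \emph{every} pair of distinct vertices of $A$ is $R(A) \cup N(A)$-comparable; this is where completeness of $A$ as an $RN$ graph is genuinely used.

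For part~\ref{it:7}, I must show $E$ contains no bad quasicycle of length $\ell \ge 3$. Suppose $x_1, \dots, x_j$ (with $j \ge 3$) is such a quasicycle: $(x_i, x_{i+1}) \in R(E)$ for $i = 1, \dots, j-1$ and $(x_1, x_j) \in N(E)$. Let $x_i \in X_{k_i}(E)$. Applying condition~\ref{it:2} to each $R$-edge gives $(v_{k_i}, v_{k_{i+1}}) \in R(A)$, and applying condition~\ref{it:3} to the $N$-edge gives $(v_{k_1}, v_{k_j}) \in N(A)$. But then $v_{k_1}, v_{k_2}, \dots, v_{k_j}$ is a bad quasicycle of length $j \ge 3$ in $A$ — contradicting the fact that $A$, being (the complete $RN$ graph of) a poset, is good. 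The only thing to check is that the vertices $v_{k_1}, \dots, v_{k_j}$ need not be distinct, but Definition~\ref{def:two} does not require distinctness, so the quasicycle in $A$ is legitimate; alternatively, by part~\ref{it:5} the indices are strictly increasing along the $R$-path, so $v_{k_1} <_A v_{k_j}$, while the $N$-edge would force $(v_{k_1}, v_{k_j}) \in N(A) \subseteq {<_A}$ consistently, yet the transitivity of $R(A)$ (as $A$ is a poset, $R(A)$ is transitive) gives $(v_{k_1}, v_{k_j}) \in R(A)$, contradicting disjointness of $R(A)$ and $N(A)$. Either route closes the argument. I expect the main obstacle — such as it is — to be part~\ref{it:6}: keeping straight exactly which property of the poset $A$ (totality of $\le_A$, equivalently $R(A) \cup N(A) = {<_A}$) is invoked, and not accidentally assuming $E$ itself is complete.
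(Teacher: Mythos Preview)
Your proof is correct and follows essentially the same route as the paper: part~\ref{it:5} from Definition~\ref{dfn:ARN}\ref{it:2}--\ref{it:3} and the enumeration of $X(A)$, part~\ref{it:6} from part~\ref{it:5} plus completeness of $A$ and a count, part~\ref{it:7} by projecting a putative bad quasicycle down to $A$. You are more explicit than the paper about where completeness of $A$ enters (particularly in~\ref{it:6}) and you offer a second route for~\ref{it:7} via transitivity of $R(A)$, but the underlying arguments coincide.
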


\begin{proof}
Part~\ref{it:5} follows directly from Definition~\ref{dfn:ARN}~\ref{it:2} and~\ref{it:3}
as well as from our choice of the enumeration $\{v_1, v_2, \dots, v_p\}$.

In order to deduce part~\ref{it:6} we note that the ``in particular''-part of~\ref{it:5}
entails ${\big|V\bigl(\tilde A\bigr)\cap X_i(E)\big| \le 1}$ for all $i\in [p]$. Owing 
to $|X(\tilde A)| =p$, we must have equality in all these estimates, so $\tilde{A}$ is indeed
crossing.

To verify~\ref{it:7} we assume for the sake of contradiction that 
$\{x_1,x_2,\cdots,x_\ell\}$ is the vertex set of a bad quasicycle 
with $(x_i,x_{i+1})\in R(E)$ for $i=1,2,\cdots, \ell-1$, while $(x_1,x_\ell)\in N(E)$.  
Let $\psi\colon X(E)\longrightarrow X(A)$ be the projection sending for each $i\in [p]$
the set $X_i(E)$ to $v_i$. 
Due to the conditions~\ref{it:2} and~\ref{it:3} from Definition~\ref{dfn:ARN} we 
get $\bigl(\psi(x_i),\psi(x_{i+1})\bigr) \in R(A)$ for $i\in [\ell-1]$ while 
$\bigl(\psi(x_1),\psi(x_\ell)\bigr)\in N(A)$. 
In other words, $\{\psi(x_1), \ldots, \psi(x_{\ell})\}$ is a bad 
quasicycle in $A$. This, however, contradicts the fact that $A$ is a good $RN$ graph.
\end{proof}

\begin{definition}
For two ordered $A$-partite $RN$ graphs $E$ and $F$ an \emph{embedding of $E$ into~$F$} is 
an injection $f\colon X(E)\to X(F)$ which is
\begin{enumerate}[label=\rmlabel]
 \item order preserving with respect to $<_E$ and $<_F$, and satisfies
 \item $f\bigl(X_i(E)\bigr)\subseteq X_i(F)$ for all $i=1,2,\dots, p$ as well as
 \item $(x,y)\in R(E)  \iff \bigl(f(x), f(y)\bigr) \in R(F)$ and \\
	    $(x,y)\in N(E) \iff \bigl(f(x), f(y)\bigr) \in N(F)$.
	    \end{enumerate}
Similarly as before the image $f(E)=\tilde E$ of such an embedding is called a \emph{copy}
of $E$ and by~$\binom{F}{E}$ we will denote the set of all copies of $E$ in $F$.
\end{definition}

The next lemma is an important component of partite amalgamation:

\begin{lemma}[Partite Lemma]\label{lem:one}
For every ordered $A$-partite $RN$ graph $E$ there exists an 
ordered $A$-partite $RN$ graph $F$ with $F\to (E)^A_2$.  
In other words, $F$ has the property that any $2$-colouring of $\binom{F}{A}$ 
yields a copy $\tilde E\in\binom{F}{E}$ such that $\binom{\tilde E}{A}$ is monochromatic.
\end{lemma}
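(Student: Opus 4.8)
The plan is to reduce the Partite Lemma to the Hales--Jewett theorem, exploiting part~\ref{it:6} of Fact~\ref{fct:1}: every copy of $A$ in an $A$-partite $RN$ graph is crossing, i.e.\ simply a transversal of the parts. Thus a $2$-colouring of $\binom{F}{A}$ is a $2$-colouring of certain transversals of $F$, and the task is to build $F$ so that combinatorial lines become copies of $E$ while points of the corresponding cube become copies of $A$.

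Write $\Lambda:=\binom{E}{A}$ for the set of copies of $A$ in $E$, and for $\lambda\in\Lambda$ and $i\in[p]$ let $\lambda^i$ be the unique vertex of $\lambda$ lying in $X_i(E)$. Let $N$ be the Hales--Jewett number for the alphabet $\Lambda$ and $r=2$ colours, and define $F$, the \emph{$N$-th coordinatewise power of $E$}, by: $X_i(F):=X_i(E)^N$ for $i=1,\dots,p$; for $i<j$, $\xi\in X_i(F)$ and $\eta\in X_j(F)$, declare $(\xi,\eta)\in R(F)$ iff $(\xi(k),\eta(k))\in R(E)$ for every $k\in[N]$, and likewise for $N(F)$; put no $R(F)$- or $N(F)$-edge inside any single part; and let $<_F$ be any linear order with $X_1(F)<_F\dots<_F X_p(F)$ which, inside each part, refines the coordinatewise order induced by $\leq_E$. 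One checks that $F$ is a legitimate ordered $A$-partite $RN$ graph: $R(F)\cap N(F)=\emptyset$ follows from $R(E)\cap N(E)=\emptyset$; all edges of $R(F)$ and $N(F)$ run from a lower-indexed to a higher-indexed part, so $R(F),N(F)\subseteq\;<_F$ and in particular they are acyclic; conditions~\ref{it:2} and~\ref{it:3} of Definition~\ref{dfn:ARN} hold because any single coordinate of an $R(F)$-edge (resp.\ $N(F)$-edge) between $X_i(F)$ and $X_j(F)$ is an $R(E)$-edge (resp.\ $N(E)$-edge) between $X_i(E)$ and $X_j(E)$, so that $A$-partiteness of $E$ forces $(v_i,v_j)\in R(A)$ (resp.\ $\in N(A)$); and condition~\ref{it:4} is built in.

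The heart of the matter is a dictionary between $F$ and the Hales--Jewett cube $\Lambda^N$. On the one hand, a transversal $(\xi_1,\dots,\xi_p)$ of $F$ is a copy of $A$ in $F$ exactly when $(\xi_1(k),\dots,\xi_p(k))$ is a copy of $A$ in $E$ for every $k\in[N]$; here one uses that the order clause in the definition of embedding is automatic from $X_1(F)<_F\dots<_F X_p(F)$, and that the backward implications in the $R$- and $N$-clauses are automatic because $E$ is $A$-partite. This identifies $\binom{F}{A}$ with $\Lambda^N$. On the other hand, given a nonempty ``moving set'' $I\subseteq[N]$ and a choice of copies $\lambda_k\in\Lambda$ for $k\notin I$, the map $\phi$ sending $x\in X_i(E)$ to the word $\phi(x)\in X_i(F)$ with $\phi(x)(k)=x$ for $k\in I$ and $\phi(x)(k)=\lambda_k^i$ for $k\notin I$ is an embedding of $E$ into $F$; this is verified coordinate by coordinate, the key point for a coordinate $k\notin I$ being that $\lambda_k$ is a copy of $A$, while order-preservation holds because $\phi$ is monotone in every coordinate and $<_F$ refines the coordinatewise order. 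Under the identification $\binom{F}{A}=\Lambda^N$, the copies of $A$ inside the copy $\phi(E)$ are exactly the points of the combinatorial line whose root is determined by $I$ and the values $(\lambda_k)_{k\notin I}$.

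Granting this, the lemma follows immediately: a $2$-colouring of $\binom{F}{A}=\Lambda^N$ admits, by Hales--Jewett, a monochromatic combinatorial line, and the copy $\tilde E=\phi(E)\in\binom{F}{E}$ attached to that line has $\binom{\tilde E}{A}$ monochromatic. The main difficulty is not Ramsey-theoretic, since that is entirely outsourced to Hales--Jewett; it lies in setting up and verifying the dictionary --- checking that the coordinatewise $F$ is genuinely an ordered $A$-partite $RN$ graph, that $\binom{F}{A}$ equals $\Lambda^N$ (this is where crossing-ness, part~\ref{it:6} of Fact~\ref{fct:1}, is indispensable), and that the maps $\phi$ preserve $<$ and both of $R,N$ in each direction. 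Finally, the degenerate case $\Lambda=\binom{E}{A}=\emptyset$ must be dispatched separately, but there any coordinatewise power of $E$ already works for trivial reasons.
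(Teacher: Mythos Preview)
Your proof is correct and takes a genuinely different route from the paper's. The paper invokes Theorem~\ref{thm:ACYC} again: it picks an $RN$ graph $\bar F$ with $\bar F\to(E)^A_2$, forms the categorical product $F=A\times\bar F$ (so $X_i(F)=\{v_i\}\times X(\bar F)$ with lexicographic order and product relations), lifts each copy of $A$ in $\bar F$ to a diagonal crossing copy in $F$, and pulls the colouring back to $\bar F$. A monochromatic copy of $E$ in $\bar F$ then lifts to a monochromatic $A$-partite copy in $F$.

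Your argument instead bypasses Theorem~\ref{thm:ACYC} at this stage and appeals directly to Hales--Jewett on the alphabet $\Lambda=\binom{E}{A}$, taking $F$ to be the $N$-th coordinatewise power of $E$. The dictionary you set up --- $\binom{F}{A}\cong\Lambda^N$ and combinatorial lines $\leftrightarrow$ copies of $E$ --- is exactly right, and the verifications you outline (disjointness of $R(F)$ and $N(F)$, the $A$-partite conditions, order-preservation of $\phi$ via a linear extension of the product order within each part, the two-sided preservation of $R$ and $N$) all go through; completeness of $A$ is what makes the backward implications automatic and what, via Fact~\ref{fct:1}\ref{it:6}, ensures there are no non-crossing copies of $A$ to worry about. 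This Hales--Jewett reduction is the classical way to prove partite lemmas and is arguably more self-contained here, since it avoids using the full Ramsey property of $ACYC_{RN}$ a second time; the paper's product construction, on the other hand, is shorter to write down once Theorem~\ref{thm:ACYC} is granted. Both approaches are standard and the choice between them is largely a matter of taste.
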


We derive the partite Lemma \ref{lem:one} as a direct consequence of Theorem~\ref{thm:ACYC}.

\begin{proof}[Proof of Lemma~\ref{lem:one}]
Let $E$ be ordered $A$-partite $RN$ graph with the notation as in Definition~\ref{dfn:ARN}

By Theorem \ref{thm:ACYC} there exists an $RN$ graph $\bar F$ with $\bar F \to (E)^{A}_2$.

Let $F$ be the ordered $A$-partite $RN$ graph constructed as follows:
\begin{enumerate}
\item[$\bullet$] Its partition classes are $X_i(F)=\{v_i\}\times X(\bar{F})$ for 
$i=1,\dots,p$.
\item[$\bullet$] The vertex set of $F$ is ordered by the lexicographic ordering induced 
by $\leq_{A}$ and~$\leq_{\bar{F}}$.
\item[$\bullet$] Both $R(F)$ and $N(F)$ are obtained by taking the usual direct (or categorical) product of 
$A$ and $\bar{F}$, i.e.,
\[
  \def\arraystretch{1.1}
  \begin{blockarray}{r@{\;}l}
    \begin{block}{r@{\;}l\}}
      	\bigl((a,u),(a',u')\bigr)\in R(F) 
		\iff &
		(a,a')\in R(A) \mbox{ and } (u,u') \in R(\bar{F}) \\[\jot]
		\mbox{ and  \hspace{6cm} }& \\[\jot] \tag{$\star$}
		\bigl((a,u),(a',u')\bigr)\in N(F) \iff & (a,a')\in N(A)
		\mbox{ and }
		(u, u')\in  R(\bar{F})\,. \\[\jot] 
      \end{block}
  \end{blockarray}
\]
\end{enumerate} 

\noindent

We claim that $F\to (E)^A_2$.

Indeed, consider an arbitrary 2-coloring of $\binom{F}{A}$ by red and blue. 
For each $A'\in \binom{\bar{F}}{A}$, where 
\[
	X(A')=\{x_1<_{\bar{F}} x_2 <_{\bar{F}} \dots <_{\bar{F}} x_p \}\,,
\]
the set $\bigl\{(v_i,x_i); i=1,\dots,p\bigr\}$ induces a unique copy of $A$ in $F$.
Consequently, the coloring of $\binom{F}{A}$ yields an auxiliary coloring 
of $\binom{\bar{F}}{A}$ by red and blue.
Since $\bar{F}\to (E)^{A}_2$, there is a monochromatic 
$E'\in \binom{\bar{F}}{E}$.
Due to property~\ref{it:4} of Definition~\ref{dfn:ARN} we have 
\[
	X_1(E') < X_2(E') <  \dots <  X_p(E')\,,
\]
and thus the set 
\[
	\bigcup^p_{i=1}\{(v_i,x); x\in X_i(E'), i=1,\dots,p\}
\]
induces a monochromatic $A$-partite copy of $E$ in $F$.

Finally we note that due to $(\star)$, $F$ is an $A$-partite $RN$ graph and consequently, due to Fact~\ref{fct:1}~\ref{it:7}, $F$ is a good $RN$ graph.
\end{proof}

\subsection{Partite Construction}
 
Recall that within the proof of Theorem \ref{thm:three} we are currently in the 
situation that for some $\ell\ge 3$ an $(\ell-1)$-$RN$ graph $C_{\ell-1}$ with 
$C_{\ell-1}\to (B)^A_2$ is given. We are to prove the existence of an $\ell$-$RN$ 
graph $C_\ell$ with $C_{\ell}\to (B)^A_2$ and the additional property that there exists a 
homomorphism $h_{\ell-1}$ from $C_{\ell}$ to $C_{\ell-1}$.

To accomplish this task we will utilise the partite 
construction (see e.g.~\cite{NR4}, \cite{NR5}).
Set $D=C_{\ell-1}$ and let $\binom{D}{A} = \{A_1, \dots, A_\alpha\}$, 
$\binom{D}{B}= \{B_1, \dots, B_\beta\}$.
Set $|X(D)|=d$ and without loss of generality assume that $X(D)=\{1,2,\dots,d\}$.

We are going to introduce $D$-partite ordered $RN$ graphs $P_0,P_1, \dots, P_\alpha$, 
i.e., ordered $RN$-graphs with the property that for $j=0, 1, \ldots, \alpha$ 
the mapping $f_j\colon X(P_j)\to \{1,2,\dots, d\}$, which maps each $x\in X_i(P_j)$ 
to $i$ is a homomorphism from $P_j$ to $D$.
 
The $RN$ graph $P_0$ is formed by $\beta$ vertex disjoint copies 
$\tilde B_1,\tilde B_2\dots, \tilde B_\beta$ of $B$
placed on the partite sets $X_i(P_0)$, $i=1,2,\dots,d$ of cardinalities 
$|X_i(P_0)|= |\{h\in[\beta]; i\in V(B_h\}|$ in such a way that 
for each $h=1,2,\dots,\beta$ we have
\[
	|X(\tilde B_h)\cap X_i(P_0)|=
	\begin{cases} 
		1 & \mbox{ if } i\in X(B_h), \\
		0 & \mbox{ otherwise.}
	\end{cases}
\]

Clearly the mapping $f_0$ which for all $i\in\{1,2,\dots d\}$ sends all elements $x\in X_i(P_0)$ 
to~$\{i\}$ is a homomorphism.

Moreover, $P_0$ is a good $RN$ graph, and thus, in particular, it is an $\ell$-$RN$ graph.

Next we assume that for some $j<\alpha$ a $D$-partite $RN$ graph $P_j$ 
together with a homomorphism $f_j\colon P_j\to D=C_{\ell-1}$
satisfying $X_i(P_j)=f^{-1}_j(i)$ for each $i\in X(D)$ has been constructed. 
We are going to describe the construction of $P_{j+1}$. To this end we consider 
the copy $A_{j+1}\in \binom{D}{A}$, let
\[
	X(A_{j+1}) = \{v_1 <v_2<\dots < v_p\}
\]
and let $E_{j+1}$ be the ordered $A$-partite $RN$ subgraph of $P_j$ induced on the set
$\bigcup_{t=1}^p X_{v_t}(P_j)$.

Applying the Partite Lemma to $E_{j+1}$ yields an ordered $A$-partite $RN$ graph $F_{j+1}$ 
such that $F_{j+1}\to (E_{j+1})^{A_{j+1}}_2$.

Set $\cE_{j+1} = \binom{F_{j+1}}{E_{j+1}}$ and extend each copy $E'\in\cE_{j+1}$ to a copy 
$P_j' = P_j(E')$ of $P_j$ in such a way that, for any $E', E'' \in \cE_{j+1}$, the vertex intersection of $P_j'=P_j(E')$ and $P_j''=P_j(E'')$
is the same as the vertex intersection of $E'$ and $E''$.
In other words
\[
X_i(P_j')\cap X_i(P_j'')= \begin{cases}
                             X_i(E')\cap X_i(E'') &\mbox{ if } i\in X(A_{j+1})\\
                             \emptyset &\mbox{ otherwise.}
                            \end{cases}
\]
Finally, let $P_{j+1}$ be the $D$-partite graph which is the union of all such copies of $P_j$, i.e., more formally
\[
X_i(P_{j+1}) = \bigcup\bigl\{X_i\bigl(P_j(E)\bigr)\,;\, E \in \cE_{j+1}\bigr\}
\]
for all $i=1,2,\dots, d$ and
\begin{align*}
R(P_{j+1}) &= \bigcup\bigl\{R\bigl(P_j(E)\bigr)\,;\, E \in \cE_{j+1}\bigr\}\,, \\
N(P_{j+1}) &= \bigcup\bigl\{N\bigl(P_j(E)\bigr)\,;\, E \in \cE_{j+1}\bigr\}
\end{align*}
and let $<_{P_{j+1}}$ be any linear order on $\bigcup\limits_{i=1}^d X_i(P_{j+1})$ 
satisfying
\[
X_1(P_{j+1})<_{P_{j+1}} \dots <_{P_{j+1}} X_d(P_{j+1})\,.
\]

Finally, let $f_{j+1}\colon X(P_{j+1})\to X(D) = \{1,2,\dots, d\}$ satisfy $f_{j+1}(x) = i$
for all ${x\in X_i(P_{j+1})}$ and $i=1,2,\dots,d$. Due to the construction above and the fact that
${f_j\colon X(P_j)\to X(D)}$ is a homomorphism, the mapping $f_{j+1}$ is a homomorphism as well.

The crucial part of our argument will be the verification of the following

\begin{claim} \label{clm:2}
If $P_j$ is an $\ell$-$RN$ graph, then so is $P_{j+1}$. 
\end{claim}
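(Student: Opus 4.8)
The plan is to analyze how a bad quasicycle in $P_{j+1}$ would have to sit relative to the copies $P_j(E')$, $E'\in\cE_{j+1}$, that were glued together to form $P_{j+1}$. Suppose for contradiction that $x_1, x_2, \dots, x_k$ is a bad quasicycle in $P_{j+1}$ of length $k\in[2,\ell]$, so that $(x_t,x_{t+1})\in R(P_{j+1})$ for $t=1,\dots,k-1$ and $(x_1,x_k)\in N(P_{j+1})$. Since $R(P_{j+1})$ and $N(P_{j+1})$ are the unions of $R(P_j(E))$ and $N(P_j(E))$ over $E\in\cE_{j+1}$, each edge of the quasicycle lies inside some copy of $P_j$. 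The key structural point to exploit is that the ``new'' vertices of $P_{j+1}$ — those outside the amalgamation nucleus indexed by $X(A_{j+1})$ — belong to a \emph{unique} copy $P_j(E)$; only the vertices projecting into $X(A_{j+1})$ can be shared among several copies. So I first isolate the copy $E'\in\cE_{j+1}$ containing the $N$-edge $(x_1,x_k)$, and work inside $P_j(E')$.

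Next I would argue that the entire quasicycle collapses into $P_j(E')$. Walk along the $R$-path $x_1, x_2,\dots, x_k$: the closing $N$-edge forces $x_1$ and $x_k$ into $P_j(E')$. Then I trace the path inward from both ends. Whenever consecutive vertices $x_t, x_{t+1}$ lie in different copies $P_j(E')$ and $P_j(E'')$, the edge $(x_t,x_{t+1})$ must actually live in a copy containing both, which (by the intersection description $X_i(P_j')\cap X_i(P_j'')=X_i(E')\cap X_i(E'')$ for $i\in X(A_{j+1})$, and $\emptyset$ otherwise) forces both $x_t$ and $x_{t+1}$ to project into $X(A_{j+1})$, i.e.\ both lie in the shared nucleus $E_{j+1}$, hence in \emph{every} copy, in particular in $P_j(E')$. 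Using Fact~\ref{fct:1}\ref{it:5} applied to the $A$-partite structure — so that the $R$-path is strictly increasing in the $D$-partite order, hence visits the classes indexed by $X(A_{j+1})$ in a controlled, monotone way — I get that once the path enters $P_j(E')$ it cannot leave and re-enter: the set of indices $t$ with $x_t\in P_j(E')$ is an initial segment and a final segment simultaneously, so it is everything. Thus $x_1,\dots,x_k$ is a bad quasicycle of length $k\le\ell$ entirely within the single copy $P_j(E')\cong P_j$, contradicting the hypothesis that $P_j$ is an $\ell$-$RN$ graph.

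I expect the main obstacle to be precisely the ``cannot leave and re-enter'' step: making rigorous that a quasicycle cannot wander out of one copy of $P_j$ through the shared nucleus and come back. The clean way is to observe that any maximal subpath of $x_1,\dots,x_k$ lying outside a fixed copy $P_j(E')$ must have its two endpoints in the nucleus $E_{j+1}\subseteq P_j(E')$, so that subpath is itself an $R$-path between two nucleus vertices; but then, since $E'$ was an arbitrary copy and $F_{j+1}$ is obtained by amalgamating copies over $E_{j+1}$, one can re-choose the copy carrying this subpath so that it too lies in $P_j(E')$ — more carefully, every edge of the quasicycle lies in some $P_j(E_e)$, and all these $E_e$ share the same nucleus restricted to the quasicycle's nucleus vertices, which lets one substitute a single copy. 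One must also double-check the degenerate cases (the quasicycle of length $2$ is excluded by $R\cap N=\emptyset$ together with the homomorphism $f_{j+1}$ into $D$, and very short quasicycles entirely inside the nucleus are handled by the same substitution). Given the monotonicity from Fact~\ref{fct:1}\ref{it:5}, none of this requires new Ramsey input — it is purely a bookkeeping argument about how the partite amalgamation was assembled.
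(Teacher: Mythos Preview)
There is a genuine gap. Your key step asserts that a vertex $x$ with $f_{j+1}(x)\in X(A_{j+1})$ ``lies in the shared nucleus $E_{j+1}$, hence in \emph{every} copy''. This is false. Such a vertex lies in $F_{j+1}$, but $F_{j+1}$ is not contained in every $P_j(E')$: by construction $P_j(E')\cap F_{j+1}=E'$, and the various copies $E'\in\cE_{j+1}$ of $E_{j+1}$ inside $F_{j+1}$ are in general distinct (merely overlapping) subgraphs. So the quasicycle can perfectly well leave $P_j(E')$ through a vertex of $E'$, traverse vertices unique to some $P_j(E'')$, and re-enter $P_j(E')$ through another vertex of $E'\cap E''$. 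Monotonicity of the $D$-partite indices along the $R$-path does not prevent this, since the classes indexed by $X(A_{j+1})$ are interspersed among the classes $1,\dots,d$ and can be visited several times. Your fallback (``re-choose the copy carrying this subpath'') does not help either: the subpath outside $P_j(E')$ may use vertices that belong to a \emph{unique} copy $P_j(E'')\ne P_j(E')$, and those vertices simply are not in $P_j(E')$.

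What you are missing is the use of two hypotheses your argument never invokes: that $A$ is \emph{complete} and \emph{good}. The paper projects the quasicycle to $D=C_{\ell-1}$ via $f_{j+1}$; since $C_{\ell-1}$ is an $(\ell-1)$-$RN$ graph the quasicycle must have length exactly~$\ell$. If two projected vertices $f_{j+1}(x_r),f_{j+1}(x_s)$ with $s-r\ge 2$ both lie in $X(A_{j+1})$, completeness of $A$ gives an $R$- or $N$-edge between them in $D$, and either choice shortcuts the image to a bad quasicycle of length $\le\ell-1$ in $C_{\ell-1}$, a contradiction. This forces the set $\{i:f_{j+1}(x_i)\in X(A_{j+1})\}$ to be $\{1,\ell\}$ (the case $\ell=3$ with all three in $X(A_{j+1})$ being ruled out by goodness of $A$). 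Only then does the ``all in one copy'' conclusion follow, because $x_2,\dots,x_{\ell-1}$ each lie in a unique copy of $P_j$ and are linked by edges. Without the shortcut through completeness of $A$ there is no reason the quasicycle should sit in a single $P_j(E')$.
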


Once this is shown we will know that, in particular, $P_\alpha$ is an $\ell$-$RN$ graph.
Moreover, a standard argument (see e.g.~\cite{NR5}) shows that $P_\alpha\to (B)^A_2$.
Indeed, any red/blue colouring of $\binom{P_\alpha}{A}$ yields a copy of $P_{\alpha-1}$
in which all copies $\tilde A$ of $A$ with $f_\alpha({\tilde A})=A_\alpha$ are the 
same colour. By iterating this argument we eventually obtain a copy $\tilde P_0$ of $P_0$ such that
the colour of any crossing copy ${\tilde A}\in \binom{\tilde P_0}{A}$ depends only 
on $f_\alpha({\tilde A})$.
Owing to $C_{\ell-1}\to (B)^A_2$ this leads to a monochromatic copy of $B$ in $P_\alpha$.
  
For these reasons, the recursion step in the proof of Theorem~\ref{thm:three} can be completed
with the stipulations $C_{\ell}=P_\alpha$ and $h_{\ell-1}=f_\alpha$. 

\begin{proof}[Proof of Claim~\ref{clm:2}]
Assume that $(x,y)\in N(P_{j+1})$ and that there is an oriented path
$x=x_1,\dots,x_{\ell'}=y$ in $R(P_{j+1})$, where $\ell'\leq \ell$. 
Note that since $f_{j+1}:P_{j+1}\to D = C_{\ell-1}$ is a homomorphism into the $(\ell-1)$-$RN$ 
graph $C_{\ell-1}$
(containing no bad quasicycle of length~$\leq \ell-1$) we can assume that $\ell'=\ell$.

By the definition of $N(P_{j+1})$ there exists a copy $E'\in \cE_{j+1}$ such 
that $x,y \in X\bigl(P_j(E')\bigr)$.
On the other hand, since $P_j$ is an $\ell$-$RN$ graph by assumption, not all edges of the path
$x_1,\dots, x_\ell$ belong to $P_j(E')$. This together with the fact that $x$ and $y$ are in the same copy
of $P_j$ implies that the set 
\[
	S = \bigl\{f_{j+1}(x_i)\,;\, i= 1, \dots, \ell\bigr\}\cap X(A_{j+1})
\]
satisfies $|S|\ge 2$.

We further claim that for some $r$ and $s$ with $s-r\geq 2$ both $f_{j+1}(x_r)$ and $f_{j+1}(x_s)$ belong to $X(A_{j+1})$. 
Otherwise for some $r$ we would have $S = \{f_{j+1}(x_r), f_{j+1}(x_{r+1})\}$.
This, however, would mean that all vertices of the quasicycle would have to belong to $P_j(E')$, contrary to the assumption that $P_j$ is an $\ell$-$RN$ graph.

Now consider $\{f_{j+1}(x_r), f_{j+1}(x_s)\} \subseteq X(A_{j+1})$ with $s-r\geq 2$. 
Due to the fact that $A_{j+1}$ is a complete $RN$ graph either 
$\bigl(f_{j+1}(x_r), f_{j+1}(x_s)\bigr) \in R(A_{j+1})$ or
${\bigl(f_{j+1}(x_r), f_{j+1}(x_s)\bigr) \in N(A_{j+1})}$.

If the former holds, then we get a contradiction, since 
\[
	f_{j+1}(x_1), f_{j+1}(x_2), \dots, f_{j+1}(x_r), f_{j+1}(x_s), \dots, f_{j+1}(x_\ell)
\]
would be a quasicycle of length $\leq \ell-1$ in $C_{\ell-1}$.

This argument proves that for any $r, s\in \{1, 2, \ldots, \ell\}$
with 
\[
	s-r\ge 2
	\quad \text{ and } \quad 
	\{f_{j+1}(x_r), f_{j+1}(x_s)\}\subseteq X(A_{j+1})
\]
we have $\bigl(f_{j+1}(x_r), f_{j+1}(x_s)\bigr) \in N(A_{j+1})$.

Now suppose that there is a pair $(r, s)$ with the above properties satisfying 
in addition $(r, s)\ne (1, \ell)$. Then $f_{j+1}(x_r), \dots, f_{j+1}(x_s)$ would 
be a bad quasicycle in $C_{\ell-1}$ whose length is at most $\ell-1$, which is 
again a contradiction. 

Thus either $\ell=3$ and $S=\{f_{j+1}(x_1), f_{j+1}(x_2), f_{j+1}(x_3)\}$  
or $S=\{f_{j+1}(x_1), f_{j+1}(x_\ell)\}$. The first alternative cannot happen,
since $A$ is good. If the second possibility happens, there is a copy $E''\in \cE_{j+1}$ 
such that all the vertices $x_1, \ldots, x_\ell$ belong to $P_j(E'')$. 
But, since $P_j(E'')$ is an induced copy of $P_j$ in $P_{j+1}$, this means that there 
is a bad quasicycle of length $\ell$ in $P_j(E'')$, which contradicts our assumption 
about~$P_j$.
\end{proof}

As we observed after stating Claim~\ref{clm:2}, the proof of Theorem~\ref{thm:three}
is thereby complete.

\subsection*{Acknowledgement} Many thanks to Christian Reiher for many helpful comments 
as well as for his technical help with the preparation of this manuscript.
We also thank Jan Hubi\v{c}ka and the referees for helpful remarks. 

\begin{bibdiv}
\begin{biblist}

\bib{AH}{article}{
   author={Abramson, Fred G.},
   author={Harrington, Leo A.},
   title={Models without indiscernibles},
   journal={J. Symbolic Logic},
   volume={43},
   date={1978},
   number={3},
   pages={572--600},
   issn={0022-4812},
   review={\MR{503795}},
   doi={10.2307/2273534},
}

\bib{Fo}{article}{
   author={Fouch{\'e}, W. L.},
   title={Symmetry and the Ramsey degree of posets},
   note={15th British Combinatorial Conference (Stirling, 1995)},
   journal={Discrete Math.},
   volume={167/168},
   date={1997},
   pages={309--315},
   issn={0012-365X},
   review={\MR{1446753}},
   doi={10.1016/S0012-365X(96)00236-1},
}

\bib{N2005}{article}{
   author={Ne{\v{s}}et{\v{r}}il, Jaroslav},
   title={Ramsey classes and homogeneous structures},
   journal={Combinatorics, Probability and Computing},
   volume={14},
   date={2005},
   pages={171--189},
}
	
\bib{NR1}{article}{
   author={Ne{\v{s}}et{\v{r}}il, Jaroslav},
   author={R{\"o}dl, Vojt{\v{e}}ch},
   title={Partitions of finite relational and set systems},
   journal={J. Combinatorial Theory Ser. A},
   volume={22},
   date={1977},
   number={3},
   pages={289--312},
   review={\MR{0437351}},
}
		
\bib{NR2}{article}{
   author={Ne{\v{s}}et{\v{r}}il, Jaroslav},
   author={R{\"o}dl, Vojt{\v{e}}ch},
   title={Ramsey classes of set systems},
   journal={J. Combin. Theory Ser. A},
   volume={34},
   date={1983},
   number={2},
   pages={183--201},
   issn={0097-3165},
   review={\MR{692827}},
   doi={10.1016/0097-3165(83)90055-9},
 }

\bib{NR3}{article}{
   author={Ne{\v{s}}et{\v{r}}il, Jaroslav},
   author={R{\"o}dl, Vojt{\v{e}}ch},
   title={Combinatorial partitions of finite posets and lattices---Ramsey
   lattices},
   journal={Algebra Universalis},
   volume={19},
   date={1984},
   number={1},
   pages={106--119},
   issn={0002-5240},
   review={\MR{748915}},
   doi={10.1007/BF01191498},
}
	 
\bib{NR4}{article}{
   author={Ne{\v{s}}et{\v{r}}il, Jaroslav},
   author={R{\"o}dl, Vojt{\v{e}}ch},
   title={The partite construction and Ramsey set systems},
   note={Graph theory and combinatorics (Cambridge, 1988)},
   journal={Discrete Math.},
   volume={75},
   date={1989},
   number={1-3},
   pages={327--334},
   issn={0012-365X},
   review={\MR{1001405}},
   doi={10.1016/0012-365X(89)90097-6},
}
		
\bib{NR5}{article}{
   author={Ne{\v{s}}et{\v{r}}il, Jaroslav},
   author={R{\"o}dl, Vojt{\v{e}}ch},
   title={Two proofs of the Ramsey property of the class of finite
   hypergraphs},
   journal={European J. Combin.},
   volume={3},
   date={1982},
   number={4},
   pages={347--352},
   issn={0195-6698},
   review={\MR{687733}},
   doi={10.1016/S0195-6698(82)80019-X},
}
		 
\bib{PTW}{article}{
   author={Paoli, M.},
   author={Trotter, W. T., Jr.},
   author={Walker, J. W.},
   title={Graphs and orders in Ramsey theory and in dimension theory},
   conference={
      title={Graphs and order},
      address={Banff, Alta.},
      date={1984},
   },
   book={
      series={NATO Adv. Sci. Inst. Ser. C Math. Phys. Sci.},
      volume={147},
      publisher={Reidel, Dordrecht},
   },
   date={1985},
   pages={351--394},
   review={\MR{818500}},
}

\bib{So1}{article}{
   author={Soki\'c, Miodrag},
   title={Ramsey properties of finite posets},
   journal={Order},
   volume={29},
   date={2012},
   number={1},
   pages={1--30},
   issn={0167-8094},
   review={\MR{2948746}},
   doi={10.1007/s11083-011-9195-3},
}


\bib{So2}{article}{
   author={Soki\'c, Miodrag},
   title={Ramsey property, ultrametric spaces, finite posets, and universal
   minimal flows},
   journal={Israel J. Math.},
   volume={194},
   date={2013},
   number={2},
   pages={609--640},
   issn={0021-2172},
   review={\MR{3047085}},
   doi={10.1007/s11856-012-0101-5},
}

\bib{SZ}{article}{
	author={Solecki, S.},
    author={Zhao, M.},
    title={A Ramsey Theorem for Partial Orders with Linear Extensions},
    journal={European J. Combin.},
    volume={60},
    date={2017},
    number={1},
    pages={21--30},
    issn={0195-6698},
    doi={10.1016/j.ejc.2016.08.012},
}

\end{biblist}
\end{bibdiv}

\end{document}